\theoremstyle{plain}
\newtheorem{prop}{Proposition}
\newtheorem{thm}[prop]{Theorem}
\newtheorem{cor}[prop]{Corollary}
\newtheorem{fact}[prop]{Fact}
\newtheorem*{thmA}{Theorem A}
\theoremstyle{definition}
\theoremstyle{remark}
\newtheorem{rem}[prop]{Remark}
\numberwithin{prop}{section} 
\numberwithin{equation}{section}
\numberwithin{claim}{prop}
\DeclareMathOperator{\spn}{span}
\DeclareMathOperator{\alg}{alg}
\DeclareMathOperator{\mmod}{mod}
\DeclareMathOperator{\Hom}{Hom}
\DeclareMathOperator{\End}{End}
\DeclareMathOperator{\Ext}{Ext}
\DeclareMathOperator{\image}{im}
\DeclareMathOperator{\kernel}{ker}
\DeclareMathOperator{\iid}{id}
\DeclareMathOperator{\sgn}{sgn}
\DeclareMathOperator{\Mind}{\mathbf{ind}}
\DeclareMathOperator{\trace}{tr}
\DeclareMathOperator{\eval}{ev}
\DeclareMathOperator{\dHom}{\underline{\Hom}}
\DeclareMathOperator{\dExt}{\underline{\Ext}}
\DeclareMathOperator{\utimes}{\underline{\otimes}}
\DeclareMathOperator{\Cone}{Cone}
\newcommand{\der}{\partial}
\newcommand{\eps}{\varepsilon}
\newcommand{\ca}[1]{\mathcal{#1}}
\newcommand{\N}{\mathbb{N}}
\newcommand{\Z}{\mathbb{Z}}
\newcommand{\euV}{\mathfrak{V}}
\newcommand{\euE}{\mathfrak{E}}
\newcommand{\boA}{\mathbf{A}}
\newcommand{\boB}{\mathbf{B}}
\newcommand{\uboA}{\underline{\boA}}
\newcommand{\uboB}{\underline{\boB}}
\newcommand{\dbr}{\rrbracket}
\newcommand{\dbl}{\llbracket}
\newcommand{\caK}{\ca{K}}
\newcommand{\caH}{\ca{H}}
\newcommand{\Hmod}{{\caH\mbox{-}\!\mmod}}
\newcommand{\HImod}{{\caH_I\mbox{-}\!\mmod}}
\newcommand{\Ralg}{{R\mbox{-}\!\alg}}
\newcommand{\caB}{\mathcal{B}}
\newcommand{\caC}{\ca{C}}
\newcommand{\caP}{\ca{P}}
\newcommand{\argu}{\underline{\phantom{x}}} 
\newcommand{\FP}{{\rm FP}}
\begin{document}
\title{The Euler characteristic of a Hecke algebra}
\author{T.~Terragni \and Th.~Weigel}
\date{\today}
\address{T.~Terragni, Th.~Weigel\\
  Universit\`a di Milano-Bicocca\\
  Ed.~U5, Via R.Cozzi, 55\\
  20125 Milano, Italy}
\email{tom.terragni@gmail.com}
\email{thomas.weigel@unimib.it}

\subjclass[2010]{Primary 20C08, secondary 16S80, 20F55}

\begin{abstract}
It is shown that the Euler characteristic $\chi_{(\caH,\caB,\eps_q)}$ of a $\Z\dbl q\dbr$-Hecke algebra $\caH$ associated with a finitely generated Coxeter group $(W,S)$ 
coincides with $p_{(W,S)}(q)^{-1}$, where $p_{(W,S)}(t)$ is the Poincar\'e series of $(W,S)$.
\end{abstract}
\maketitle

\section{Introduction}
\label{s:intro}
For a finitely generated {\it Coxeter group} $(W,S)$ one defines the {\it Poincar\'e series} by
\begin{equation}
\label{eq:poinser}
p_{(W,S)}(t)=\sum_{w\in W} t^{\ell(w)}\in\Z\dbl t\dbr,
\end{equation}
where $\ell\colon W\to\N_0$ denotes the {\it length function} associated with $(W,S)$.
It is well known that $p_{(W,S)}$ is a rational function in $t$ (cf.~\cite[Chap.~IV, \S1, Ex.~25 and 26]{bourbaki--gal46}). 
This function is explicitly known for finite Coxeter groups, and explicitly computable for any given infinite, finitely generated Coxeter group $(W,S)$ using the recursive sum formula 
\begin{equation}
\label{eq:altsfor}
\frac{1}{p_{(W,S)}(t)}=\sum_{I\subsetneq S} (-1)^{|S|-|I|-1}\frac{1}{p_{(W_I,I)}(t)}
\end{equation}
(cf. \cite[\S 5.12]{humphreys--rgcg}). The formula \eqref{eq:altsfor} suggests that one should be able to interpret ${p_{(W,S)}(t)}^{-1}$ as an Euler characteristic
in a suitable context. 
This point of view is also supported by a result of J-P.~Serre who showed that $p_{(W,S)}(1)^{-1}$ coincides with the Euler characteristic of the Coxeter group $W$ (cf. \cite[\S 1.9]{serre--cgd}).

The main goal of this paper is to introduce the notion of an Euler characteristic for certain augmented algebras with a distinguished basis, and to show that ${p_{(W,S)}(q)}^{-1}$ coincides with the Euler characteristic of the $\Z\dbl q\dbr$-\emph{Hecke algebra} $\caH_q(W,S)$ endowed with the standard basis $\caB=\{\,T_w\mid w\in W\,\}$ and the index representation $\eps_q\colon \caH_q(W,S)\to \Z\dbl q\dbr$.

Let $R$ be a commutative ring, $\boA$ an $R$-algebra, $\caB$ a subset of $\boA$ containing $1_\boA$ such that $\boA$ is a free $R$-module spanned by $\caB$, and let $\lambda\colon \boA\to R$ be an $R$-algebra homomorphism satisfying
\begin{itemize}
\item[(i)] the left $\boA$-module $R_\lambda$ associated with $\lambda$ is of type \FP, and
\item[(ii)] the $R$-linear map $\tilde\mu\colon \boA\to R$ given by $\tilde{\mu}(1)=1$, $\tilde\mu(b)=0$ for all $b\in \caB\setminus\{1\}$ satisfies $\tilde\mu(xy)=\tilde\mu(yx)$ for all $x,y\in \boA$.
\end{itemize}
Then one may define the \emph{Euler characteristic} $\chi_{(\boA, \caB, \lambda)}\in R$ as the value of the trace function associated with $\tilde\mu$ on the Hattori--Stallings rank (cf.~\cite[\S{}IX.2]{brown--cg}) of the left $\boA$-module $R_\lambda$ (cf.~\S\ref{ss:eueuc}).
For short we call a triple $(\boA, \caB,\lambda)$ satisfying the conditions (i) and (ii) an \emph{Euler algebra}, e.g., $(\Z[G],G, \eps)$ for a group $G$ of type \FP\ is an Euler algebra, and $\chi_{(\Z[G],G, \eps)}$ coincides with the Euler characteristic of $G$ (cf.~\cite[\S{}IX.6]{brown--cg}).
Our main result (cf.~Thm.~\ref{thm:AA}) can be stated as follows.
\begin{thmA}
Let $(W,S)$ be a finitely generated Coxeter group, and let $\caH=\caH_q(W,S)$ be the $\Z\dbl q\dbr$-Hecke algebra associated with $(W,S)$.
Then $(\caH,\caB,\eps_q)$ is a $\Z\dbl q\dbr$-Euler algebra, and $\chi_{(\caH,\caB,\eps_q)}=p_{(W,S)}(q)^{-1}$.
\end{thmA}

In order to show the property (i) and to compute the Hattori--Stallings rank of the augmented algebra $(\caH,\eps_q)$ we will make use of a chain complex $C=(C_\bullet,\der_\bullet)$ of left $\caH$-modules first established by V.V.~Deodhar in \cite{deodhar--sgabo2}.

\begin{rem} The Poincar\'e series of $(W,S)$ (cf.~\eqref{eq:poinser}) coincides with the Poincar\'e series of ${(\caH,\caB,\eps_q)}$, given by
\begin{equation}
\label{eq:poinalt}
p_{(\caH,\caB,\eps_q)}=\sum_{b\in\caB} \eps_q(b)\in\Z\dbl q\dbr,
\end{equation}
i.e., it involves only combinatorial data of $(\caH,\caB,\eps_q)$.
In particular, one has the identity $p_{(\caH,\caB,\eps_q)}\cdot\chi_{(\caH,\caB,\eps_q)}=1$ in $\Z\dbl q\dbr$.
A similar identity involving combinatorial data and cohomological data is known for Koszul algebras (cf. \cite[p.~22, Cor.~2.2]{polishchuk-positselski--qa}).
It would be interesting to know whether there exist other examples of $\Z\dbl q\dbr$-Euler algebras $(\boA,\caB,\lambda)$ for which $p_{(\boA,\caB,\lambda)}$ given by \eqref{eq:poinalt} is defined and which satisfy the identity $p_{(\boA,\caB,\lambda)}\cdot\chi_{(\boA,\caB,\lambda)}=1$.
\end{rem}
\subsection*{Acknowledgments}
The authors would like to thank F.~Brenti for some very helpful discussions, and M.~Solleveld for pointing out that in a slightly different context projective resolutions of affine Hecke algebras were already constructed in~\cite{opdam-solleveld--haaha}.
Our gratitude goes also to A.~Mathas and S.~Schroll for informing us about the Deodhar complex (cf.~\cite{deodhar--sgabo2}, \cite{mathas--qacc}) and its relation to the complex of $(\caH,\caH)$-bimodules established in \cite{linckelmann-schroll--tsqacc}.

\noindent The present paper partly comes from the first author's PhD thesis \cite{terragni--haacg}.

\section{Coxeter groups and Hecke algebras}
\label{s:coxhec}

\subsection{Coxeter groups}
\label{ss:cox}
A \emph{Coxeter graph} $\Gamma=(\euV,\euE,m)$ is a finite combinatorial graph $(\euV,\euE)$ with vertex set%
\footnote{In this context the graph $\emptyset$ with empty vertex set is also considered as a Coxeter graph.} 
$\euV$ and non-oriented edges (i.e., two-element subsets of $\euV$) $\{i,j\}\in\euE\subseteq\caP_2(\euV)$ labelled by positive integers $m_{i,j}\geq 3$ or infinity.

The \emph{Coxeter group} $(W,S)$ associated with $\Gamma=(\euV,\euE,m)$ consists of the group $W$ generated by the set of involutions $S=\{\,s_i\mid i\in\euV\,\}$ subject to the relations $(s_is_j)^{m_{i,j}}=1$, where $\{i,j\}\in\euE$ is an edge of label $m_{i,j}<\infty$, and the commutation relations $s_is_j=s_js_i$ whenever $\{i,j\}\not\in \euE$.
The \emph{length function} on $W$ with respect to $S$ will be denoted by $\ell\colon W\to\N_0$. Since $S=S^{-1}$ is a set of involutions, $\ell(w)=\ell(w^{-1})$, and it is well known that a longest element $w_0\in W$ exists if, and only if, $W$ is finite. 
In this case it is unique and has the property that $\ell(w_0 v)=\ell(w_0)-\ell(v)$ for all $v\in W$.
A Coxeter group which is finite is called \emph{spherical}, and \emph{non-spherical} otherwise.
For a subset $I \subseteq S$ let $W_I$ be the corresponding  parabolic subgroup, i.e., $W_I$ is the subgroup of $W$ generated by $I$. 
It is isomorphic to the Coxeter group associated to the Coxeter subgraph $\Gamma^\prime$ of $\Gamma$ based on the vertices $\{\,i\in\euV\mid s_i\in I\,\}$.
The length function of $W$ restricted to $W_I$ coincides with the intrinsic length function of the Coxeter group $(W_I,I)$. 
Put
\begin{equation}
  \label{eq:lcosetreps}
  W^I=\{\, w\in W\mid \ell(ws)>\ell(w) \text{ for all }s\in I\,\},
\end{equation}
and let ${}^IW=(W^I)^{-1}$, i.e.,
\begin{equation}
  \label{eq:rcosetreps}
        {}^IW=\{\, w\in W\mid \ell(sw)>\ell(w) \text{ for all }s\in I\,\}.
\end{equation}
For the reader's convenience we recall the following properties (cf.~\cite[\S5.12]{humphreys--rgcg}).

\begin{prop}
  \label{prop:cosreps}
  Let $(W,S)$ be a Coxeter group, let $w\in W$ and let $I\subseteq S$.
  \begin{itemize} 
  \item[(a)] $W^I$ and ${}^IW$ are sets of coset representatives distinguished in the sense that the decompositions $W=W^IW_I={}_IW{}^IW$ are length-additive:     there exists a unique $4$-tuple $(w_I,w^I,{}_Iw,{}^Iw)\in W_I\times W^I\times W_I\times {}^IW$ such that $w=w^Iw_I={}_Iw{}^Iw$ and $\ell(w) = \ell(w^I) + \ell(w_I) = \ell({}_Iw) + \ell({}^Iw)$.
  \item[(b)] The element $w^I\in W^I$ is the unique shortest element in $wW_I$. 
  \item[(c)] Let $y\in W^I$ and $u\in W_I$. Then $(yu)^I= y$, $(yu)_I=u$, and $\ell(yu)=\ell(y)+\ell(u)$.
  \item[(d)] For $s\in S$ one has $W={}^{\{s\}}W \sqcup s({}^{\{s\}}W)$, where $\sqcup$ denotes disjoint union.
  \item[(e)] Let $I\subseteq J\subseteq S$. Then $W^J\subseteq W^I$. Moreover, $W^S=\{1\}$ and $W^\emptyset=W$. 
  \end{itemize}
\end{prop}

\subsection{Hecke algebras}
\label{ss:hec}
Let $R$ be a commutative ring with unit and with a distinguished%
\footnote{For certain Coxeter groups it is also possible to consider multiple parameter Hecke algebras (cf.~\cite{terragni--haacg}).} 
element $q\in R$.
The \emph{$R$-Hecke algebra} $\caH=\caH_q(W,S)$ associated with $(W,S)$ and $q$ is the unique associative $R$-algebra which is a free $R$-module with basis $\{\,T_{w}\mid {w}\in W\,\}$ subject to the relations
\begin{equation}
  \label{eq:hecrel}
  T_sT_{w}=
  \begin{cases}
    T_{s{w}}&\qquad\text{if $\ell(s{w})>\ell({w})$},\\
    (q-1)T_{w}+qT_{s{w}}&\qquad\text{if $\ell(s{w})<\ell({w})$,}
  \end{cases}
\end{equation}
for $s\in S$, ${w}\in W$. 
In particular, one has a canonical isomorphism $\caH_1(W,S)\simeq R[W]$, where $R[W]$ denotes the $R$-group algebra of $W$. 
The $R$-algebra $\caH_q(W,S)$ comes equipped with a standard basis $\caB=\{\, T_w\mid w\in W\,\}$ defining the $R$-linear map $\tilde\mu_\caB\colon \caH\to R$ given by $\tilde{\mu}_\caB(T_1)= 1$ and $\tilde\mu_\caB(T_w)=0$ for $w\in W\setminus\{1\}$. 

For $I\subseteq S$ we denote by $\caH_I$ the corresponding parabolic subalgebra, i.e., the $R$-subalgebra of $\caH$ generated by $\{\,T_s\mid s\in I\,\}$ which coincides with the $R$-module spanned by $\caB_I=\{\, T_w\mid w\in W_I\,\}=\caB\cap\caH_I$, e.g., for $I=\emptyset$, one has $\caH_\emptyset=R$.
For further details see \cite[Chap.~7]{humphreys--rgcg}.

\subsection{$\caH$-modules}
\label{ss:one} 
Any $R$-algebra homomorphism $\lambda\in\Hom_{\Ralg}(\caH,R)$ defines a $1$-di\-men\-sion\-al left $\caH$-module $R_\lambda$, i.e., for $T_{w}\in\caH$, ${w}\in W$, and $r\in R_\lambda$ one has $T_{w}.r=\lambda(T_{w})r$. 
By \eqref{eq:hecrel}, one has $\lambda(T_s)\in \{-1,q\}$ for all $s\in S$, and $\lambda(T_{s_i})=\lambda(T_{s_j})$ for $s_i, s_j\in S$ and $m_{i,j}$ odd. 
There are two distinguished $R$-algebra homomorphisms $\eps_q, \eps_{-1}\in \Hom_{\Ralg}(\caH,R)$, respectively, the \emph{index} and \emph{sign} representations (cf.~\cite[8.1.3]{geck-pfeiffer--cfcgiha}), given by $\eps_q(T_s)=q$, and $\eps_{-1}(T_s)=-1$, $s\in S$. 
In the present context, one may consider $\eps_q$ as the \emph{augmentation} of the algebra $\caH$.
Note that $\eps_q(T_w)=q^{\ell(w)}$ and $\eps_{-1}(T_w)=(-1)^{\ell(w)}$. 
For short we put $R_q=R_{\eps_q}$, $R_{-1}=R_{\eps_{-1}}$, and use also the same notation for the restriction of these modules to any parabolic subalgebra.

For $I\subseteq S$ let $\caH^I=\spn_R\{\,T_w\mid w\in W^I\,\}\subseteq \caH$.
Multiplication in $\caH$ induces a canonical map of right $\caH_I$-modules $\caH^I\otimes_R\caH_I\longrightarrow\caH$.
Let $y\in W^I$ and $u\in W_I$. As $\ell(yu)=\ell(y)+\ell(u)$ (cf. Prop.~\ref{prop:cosreps}(c)), one has $T_yT_u= T_{yu}$. 
This shows that this map is an isomorphism. 
In particular, $\caH$ is a projective right $\caH_I$-module and
\begin{equation}
  \label{eq:ind}
  \Mind_I^S(\argu) =\Mind_{\caH_I}^{\caH}(\argu) =
  \caH\otimes_{\caH_I}\argu\colon\HImod\longrightarrow\Hmod
\end{equation}
is an exact functor mapping projectives to projectives. 
Moreover, one has the following.
\begin{fact}
  \label{fact:decoH}
  The canonical map $c_I\colon\caH^I\to\Mind_I^S(R_q)$
  given by 
  $c_I(T_w)=T_w\eta_I$, where $\eta_I=T_1\otimes 1\in \Mind_I^S(R_q)$ 
  and $w\in W^I$, is an isomorphism of $R$-modules.
  Moreover, for $w\in W$, one has $T_w\eta_I=\eps_q(T_{w_I})T_{w^I}\eta_I$.
\end{fact}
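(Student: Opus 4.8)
The plan is to deduce both assertions from the right $\caH_I$-module decomposition established just above, by base-changing it along the functor $\argu\otimes_{\caH_I}R_q$. The first point to make is that the multiplication isomorphism $\caH^I\otimes_R\caH_I\to\caH$ exhibits $\caH$ as a \emph{free} right $\caH_I$-module on $\{\,T_w\mid w\in W^I\,\}$; equivalently
\[
\caH=\bigoplus_{w\in W^I}T_w\caH_I,
\]
where each $T_w\caH_I\cong\caH_I$ is free of rank one on $T_w=T_wT_1$.

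Applying $\argu\otimes_{\caH_I}R_q$ then yields
\[
\Mind_I^S(R_q)=\caH\otimes_{\caH_I}R_q=\bigoplus_{w\in W^I}\bigl(T_w\caH_I\otimes_{\caH_I}R_q\bigr),
\]
and, since $T_w\caH_I\cong\caH_I$ as right $\caH_I$-modules, the $w$-th summand is a free $R$-module of rank one with generator $T_w\otimes 1=T_w\eta_I$. Hence $\{\,T_w\eta_I\mid w\in W^I\,\}$ is an $R$-basis of $\Mind_I^S(R_q)$. Since $\{\,T_w\mid w\in W^I\,\}$ is, by the very definition of $\caH^I$, an $R$-basis of $\caH^I$ and $c_I(T_w)=T_w\eta_I$, the $R$-linear map $c_I$ carries a basis to a basis and is therefore an isomorphism of $R$-modules.

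For the remaining identity, let $w\in W$ be arbitrary and write $w=w^Iw_I$ with $w^I\in W^I$, $w_I\in W_I$ and $\ell(w)=\ell(w^I)+\ell(w_I)$, as in Proposition~\ref{prop:cosreps}(a). Length-additivity together with the Hecke relations~\eqref{eq:hecrel} gives $T_w=T_{w^I}T_{w_I}$ --- the same computation already used to show $T_yT_u=T_{yu}$ in the paragraph preceding the statement. As $T_{w_I}\in\caH_I$, it may be moved across the balanced tensor product:
\[
T_w\eta_I=T_{w^I}\bigl(T_{w_I}\otimes 1\bigr)=T_{w^I}\otimes\bigl(T_{w_I}.1\bigr)=T_{w^I}\otimes\eps_q(T_{w_I})=\eps_q(T_{w_I})\,T_{w^I}\eta_I,
\]
where $T_{w_I}.1=\eps_q(T_{w_I})$ is precisely the action defining $R_q$ as a left $\caH_I$-module.

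The argument is essentially formal, so I do not expect a genuine obstacle; the single point that requires attention is the passage across $\otimes_{\caH_I}$ in the last display, which is legitimate exactly because $w_I\in W_I$, so that $T_{w_I}$ lies in the subalgebra $\caH_I$. Everything else reduces to the freeness of $\caH$ as a right $\caH_I$-module recorded in the discussion around~\eqref{eq:ind}.
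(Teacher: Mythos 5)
Your proof is correct and follows exactly the route the paper intends: the Fact is stated without proof immediately after the observation that multiplication gives an isomorphism $\caH^I\otimes_R\caH_I\to\caH$ of right $\caH_I$-modules, and your argument simply makes explicit the base change of that decomposition along $\argu\otimes_{\caH_I}R_q$ together with the identity $T_w=T_{w^I}T_{w_I}$ from length-additivity. Nothing is missing.
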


In case that $I\subseteq S$ generates a finite group, one has the following.
\begin{prop}
  \label{prop:eI}
  Let $I$ be a subset of $S$ such that $W_I$ is finite. 
  Put $\tau_I=\sum_{w\in W_I}T_w$. 
  Then one has the following:
  \begin{itemize}
  \item[(a)] $\tau_I^2=p_{(W_I,I)}(q)\tau_I$.
  \end{itemize}
  Moreover, if $p_{(W_I,I)}(q)\in R^\times$ is invertible in $R$ and $e_I=(p_{(W_I,I)}(q))^{-1}\tau_I$, then
  \begin{itemize}
  \item[(b)] the element $e_I$ is a central idempotent in $\caH_I$, 
  \item[(c)] the left ideal $\caH e_I$ is a finitely generated, projective, left $\caH$-module isomorphic to $\Mind_I^S(R_q)$,
  \item[(d)] $T_w e_I=\eps_q(T_{w_I})T_{w^I}e_I$.
  \end{itemize}
\end{prop}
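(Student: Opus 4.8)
The plan is to base everything on the single computation that, for $s\in I$, one has $T_s\tau_I=\tau_I T_s=q\,\tau_I$. For the left identity I would apply Proposition~\ref{prop:cosreps}(f) to the Coxeter group $(W_I,I)$, writing $W_I={}^{\{s\}}W_I\sqcup s\,{}^{\{s\}}W_I$, split the sum $\sum_{w\in W_I}T_sT_w$ along this decomposition, evaluate $T_sT_w$ by \eqref{eq:hecrel} in the two cases $\ell(sw)\gtrless\ell(w)$, and observe that after regrouping the coefficient of $T_{sw'}$ and of $T_{w'}$ (for $w'\in{}^{\{s\}}W_I$) is $q$ in both cases, so the sum collapses to $q\,\tau_I$. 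Since $w\mapsto w^{-1}$ permutes $W_I$ we have $\tau_I^\antip=\tau_I$, and applying $\argu^\antip$ to the left identity gives the right identity $\tau_I T_s=q\,\tau_I$. An induction on $\ell(u)$ (using $T_u=T_sT_{u'}$ when $u=su'$ is length-additive) then yields $T_u\tau_I=\tau_I T_u=\eps_q(T_u)\,\tau_I$ for all $u\in W_I$.

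With this in hand, part~(a) follows from $\tau_I^2=\sum_{w\in W_I}T_w\tau_I=\bigl(\sum_{w\in W_I}q^{\ell(w)}\bigr)\tau_I=p_{(W_I,I)}(q)\,\tau_I$. Assuming now $p_{(W_I,I)}(q)\in R^\times$, part~(b) is formal: $e_I^2=e_I$ is (a) divided by $p_{(W_I,I)}(q)^2$; $e_I^\antip=e_I$ comes from $\tau_I^\antip=\tau_I$; and $e_I$ commutes with every $T_s$, $s\in I$, by the two-sided identity $T_s\tau_I=\tau_I T_s=q\,\tau_I$, hence $e_I$ is central in $\caH_I$ because the $T_s$ generate it. Part~(d) follows by writing $w=w^Iw_I$ with $\ell(w)=\ell(w^I)+\ell(w_I)$ (Proposition~\ref{prop:cosreps}(a)), so that $T_w=T_{w^I}T_{w_I}$ and therefore $T_we_I=T_{w^I}T_{w_I}e_I=\eps_q(T_{w_I})\,T_{w^I}e_I$, the last equality since $T_ue_I=\eps_q(T_u)e_I$ for $u\in W_I$.

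For part~(c): since $e_I$ is idempotent, $\caH=\caH e_I\oplus\caH(1-e_I)$ as left $\caH$-modules, so $\caH e_I$ is a direct summand of the free module $\caH$, hence finitely generated and projective. To identify it with $\Mind_I^S(R_q)=\caH\otimes_{\caH_I}R_q$, I would note that inside $\caH_I$ one has $\caH_Ie_I=Re_I$ (because $T_se_I=\eps_q(T_s)e_I$), that $Re_I\cong R_q$ as left $\caH_I$-modules via $e_I\mapsto 1$, and that $\caH_I=Re_I\oplus\caH_I(1-e_I)$ as left $\caH_I$-modules; applying the exact functor $\caH\otimes_{\caH_I}\argu$ (cf. \eqref{eq:ind}) identifies $\caH\otimes_{\caH_I}Re_I$ with $\caH e_I$, giving $\Mind_I^S(R_q)\cong\caH e_I$ as left $\caH$-modules. (Concretely, $h\otimes r\mapsto hre_I$ is a well-defined left $\caH$-module homomorphism; it is onto, and it is injective because, by Fact~\ref{fact:decoH} and part~(d), it carries the $R$-basis $\{T_y\eta_I\mid y\in W^I\}$ of $\Mind_I^S(R_q)$ onto the set $\{T_ye_I\mid y\in W^I\}$, which is an $R$-basis of $\caH e_I$ by the right $\caH_I$-module decomposition $\caH=\bigoplus_{y\in W^I}T_y\caH_I$.)

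The only genuinely computational point is the identity $T_s\tau_I=q\,\tau_I$; everything else is bookkeeping once that is established. The step needing the most care is (c), where invertibility of $p_{(W_I,I)}(q)$ is exactly what makes $Re_I$ a \emph{direct} $\caH_I$-summand of $\caH_I$ isomorphic to $R_q$, so that tensoring up produces the projective module $\caH e_I$.
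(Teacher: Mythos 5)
Your proof is correct and follows essentially the same route as the paper's: the key computation $T_s\tau_I=\eps_q(T_s)\tau_I$ via Proposition~\ref{prop:cosreps}(f), part (b) formally from (a), part (c) from the idempotent decomposition $\caH=\caH e_I\oplus\caH(T_1-e_I)$ together with an identification of $\caH e_I$ with $\Mind_I^S(R_q)$, and part (d) from (b) and Proposition~\ref{prop:cosreps}(a). You merely make explicit some steps the paper leaves implicit (the induction giving $T_u\tau_I=\eps_q(T_u)\tau_I$ for all $u\in W_I$, the two-sided identity via the antipode, and the basis check $\{T_ye_I\mid y\in W^I\}$ in place of the paper's surjection $\pi$ with kernel $\caH(T_1-e_I)$), all of which is fine.
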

\begin{proof} For $s\in I$, put $X_s=\sum_{w\in {}^{\{s\}}(W_I)}T_w$. 
  Then $\tau_I=(T_1+T_s)X_s$ (cf. Prop.~\ref{prop:cosreps}(d)) and therefore  
  \begin{equation*}
    T_s\tau_I= T_s (T_1 + T_s)X_s= \left[T_s + q T_1 + (q-1)T_s\right]X_s=q(T_1+T_s)X_s=\eps_q(T_s) \tau_I.
  \end{equation*}
  This shows (a). 
  Part (b) is an immediate consequence of (a), and the first part of (c) follows from the decomposition of the regular module $\caH = \caH e_I \oplus \caH (T_1-e_I)$.
  The canonical map $\pi\colon\caH\to \Mind^S_I(R_q)$, $\pi(T_w)=T_w\eta_I$, is a surjective morphism of $\caH$-modules with $\kernel(\pi)=\caH(T_1-e_I)$.
  This yields the second part of (c).
  Part (d) follows from part (b) and Proposition~\ref{prop:cosreps}(a).
\end{proof}

\section{The Deodhar complex}
\label{ss:HCcomplex}
There is a chain complex of left $\caH$-modules $C=(C_\bullet,\der_\bullet)$ which can be seen as the module-theoretic analogue of the Coxeter complex associated with a Coxeter group $(W,S)$.
This chain complex has been introduced first by V.V.~Deodhar in \cite{deodhar--sgabo2}.
For spherical Coxeter groups it was studied in more detail by A.~Mathas in \cite{mathas--qacc}, while M.~Linckelmann and S.~Schroll introduced in \cite{linckelmann-schroll--tsqacc} a two-sided version of this complex for spherical Coxeter groups.
The definition of this chain complex is quite technical and depends on the choice of a sign function. 
For the convenience of the reader in this section we recall its definition and basic properties.

\subsection{Sign maps}
\label{ss:sign}
Let  $\caP(S)$ denote the set of subsets of a finite set $S$.
A \emph{sign map} for $S$ is a function $\sgn\colon S\times\caP(S)\to\{\pm1\}$ satisfying 
\begin{equation}\label{eq:signprop}
  \sgn(s,I)\sgn(t,I\sqcup\{s\})+\sgn(t,I)\sgn(s,I\sqcup\{t\})=0
\end{equation}
for all $I\subseteq S$ and $s, t\in S\setminus I$, $s\not=t$.
Such functions do exist; e.g., if ``$<$'' is a total order on the finite set $S$, the function $\sgn(s,I)=(-1)^{|\{\,t\in S\setminus I \, \mid\, t<s\,\}|}$ is a sign map.

\subsection{The Deodhar complex}
\label{ss:indu}
Let $I$ and $J$ be subsets of $S$ satisfying $I\subseteq J\subseteq S$.
The canonical injection $\caH_I\to\caH_J$ is a morphism of augmented $R$-algebras. 
Hence it induces a morphism of left $\caH$-modules $d_I^J\colon \Mind^S_I(R_q)\longrightarrow \Mind^S_{J}(R_q)$ given by
\begin{equation}
  \label{eq:bdry}
    d_I^J(T_w\otimes_{\caH_I} r)=T_{w}\otimes_{\caH_J}r.
\end{equation}
For a subset $I\subseteq S$ put $\deg (I)=|S|-|I|-1$. 
Thus $\deg(I)\in \{\,-1,\dots,|S|-1\,\}$. 
For a non-negative integer $k$ let $C_k$ be the left $\caH$-module
\begin{equation}
  \label{eq:defC}
  C_k = \coprod_{\substack{I\subseteq S\\\deg(I)=k}} \Mind_I^S(R_q),
\end{equation}
and let $\der_k\colon C_k\to C_{k-1}$ be the morphism of left $\caH$-modules given by
\begin{equation}
  \label{eq:derdef1}
  \der_k=\sum_{\substack{I,J\subseteq S\\\deg(I)=k,\\\deg(J)=k-1}}\der_{I,J},
\end{equation}
where
\begin{equation}
  \label{eq:derdef2}
  \der_{I,J}=\begin{cases}
  \sgn(s,I) d_I^J & \text{ if }J=I\sqcup\{s\},\\
  0              & \text{ if }J\not\supseteq I,
  \end{cases}
\end{equation}
and $d_I^J$ is given as in \eqref{eq:bdry}. 
Note that $C_k=0$ for $k> |S|-1$. 
The following properties have been established in \cite[Thm.~5.1]{deodhar--sgabo2}.

\begin{thm}[V.V.~Deodhar]
\label{thm:complex}
Let $\caH=\caH_q(W,S)$ be an $R$-Hecke algebra, and let $C=(C_\bullet, \der_\bullet)$ be as described above. 
Then
\begin{itemize}
\item[{\rm (a)}] $\der_k\circ \der_{k+1}=0$, i.e., $C$ is a chain complex.
\item[{\rm (b)}] If $W$ is finite, then 
  \begin{equation*}
  H_k(C)\simeq \begin{cases}
  R_q & \text{ for }k=0\\
  R_{-1} & \text{ for }k=|S|-1\\
  0 & \text{ otherwise.}
  \end{cases}
  \end{equation*}
\item[{\rm (c)}] If $W$ is infinite, then $C$ has homology concentrated in degree zero with $H_0(C)\simeq R_q$. 
\end{itemize}
\end{thm}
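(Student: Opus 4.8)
The plan is to establish the three assertions by induction on $|S|$, using the recursive structure built into the complex $C$: deleting a single generator $s\in S$ identifies the subcomplex indexed by subsets $I\subseteq S\setminus\{s\}$ with (a shift of) the Deodhar complex for the parabolic $(W_{S\setminus\{s\}},S\setminus\{s\})$, while the quotient is built from the terms with $s\in I$. Concretely, for a fixed $s\in S$ put $T=S\setminus\{s\}$ and filter $C_\bullet$ by the subcomplex $C_\bullet'$ spanned by the summands $\Mind_I^S(R_q)$ with $s\notin I$ and the quotient $C_\bullet''$ spanned by those with $s\in I$. One checks directly from \eqref{eq:derdef1}--\eqref{eq:derdef2} that $C_\bullet'$ is a subcomplex (the differential never removes a generator, so if $s\notin I$ and $\partial_{I,J}\neq0$ then $J=I\sqcup\{t\}$ with $t\neq s$, hence $s\notin J$), and that after reindexing by $I\mapsto I\sqcup\{s\}$ the quotient complex $C_\bullet''$ is isomorphic to the complex built from the data ``$\Mind_{I\sqcup\{s\}}^S(R_q)$, $d_{I\sqcup\{s\}}^{J\sqcup\{s\}}$'' for $I\subseteq T$. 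The key point, proved via Fact~\ref{fact:decoH} and the tensor-induction identity $\Mind_I^S=\Mind_T^S\circ\Mind_I^T$, is that both $C_\bullet'$ and $C_\bullet''$ are, up to a degree shift, induced up from the Deodhar complex of $(W_T,T)$; since $\Mind_T^S$ is exact (cf. \eqref{eq:ind}) it commutes with homology.

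First I would prove part (a). This is formal: it suffices to show $\partial_{J,K}\circ\partial_{I,J}$ and $\partial_{J',K}\circ\partial_{I,J'}$ cancel in pairs whenever $K=I\sqcup\{s,t\}$ with $s\neq t$, $J=I\sqcup\{s\}$, $J'=I\sqcup\{t\}$. The composite equals $\bigl(\sgn(t,J)\sgn(s,I)+\sgn(s,J')\sgn(t,I)\bigr)\,d_I^K$, using that $d_J^K\circ d_I^J=d_I^K=d_{J'}^K\circ d_I^{J'}$ (both are the map $T_w\otimes_{\caH_I}r\mapsto T_w\otimes_{\caH_K}r$ from \eqref{eq:bdry}), and the bracket vanishes by the sign condition \eqref{eq:signprop}. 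All other compositions are zero because the relevant $\partial_{I,J}$ vanish.

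Next, parts (b) and (c) go by induction on $|S|$, with base case $S=\emptyset$ (then $C$ consists only of $C_{-1}=\Mind_\emptyset^S(R_q)=R_q$, giving $H_{-1}=R_q$, consistent with (b) since $|S|-1=-1$). For the inductive step pick $s\in S$, form the short exact sequence of complexes $0\to C_\bullet'\to C_\bullet\to C_\bullet''\to 0$ and write down its long exact homology sequence. By the induction hypothesis applied to $(W_T,T)$, together with exactness of $\Mind_T^S$, one knows $H_\bullet(C')$ and $H_\bullet(C'')$ explicitly. In the infinite case either $W_T$ is infinite — then both $C'$ and $C''$ are acyclic except $H_0(C')\simeq\Mind_T^S(R_q)$ and $H_{-1}(C'')\simeq\Mind_T^S(R_q)$ (after the shift), and the connecting map $H_0(C'')\to\ldots$ forces $H_0(C)\simeq R_q$ and $H_k(C)=0$ otherwise — or $W_T$ is finite, a case one handles by a parallel argument. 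In the finite case one must be more careful: $C''$ contributes $R_{-1}$ in the top degree and the connecting homomorphism identifies the ``$R_{-1}$ from $C'$'' with a piece of $C''$, leaving exactly $H_0\simeq R_q$ and $H_{|S|-1}\simeq R_{-1}$. I expect the main obstacle to be precisely this bookkeeping: tracking the degree shift between $C''$ and the Deodhar complex of $(W_T,T)$, keeping the sign maps compatible under restriction, and in the finite case correctly matching up the top homology $R_{-1}$ with the connecting map so that the long exact sequence collapses to the stated answer. Once the shift conventions and the naturality of $\Mind_T^S$ on homology are pinned down, the remaining verifications are routine diagram chases.
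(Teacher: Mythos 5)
The paper itself does not prove this theorem: it is quoted from Deodhar's paper \cite[Thm.~5.1]{deodhar--sgabo2}, so your attempt has to stand on its own. Your argument for part (a) is correct and is the standard one: the two composites through $I\sqcup\{s\}$ and $I\sqcup\{t\}$ both equal $d_I^K$ up to sign, and their coefficients cancel by \eqref{eq:signprop}; all other composites vanish.

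For (b) and (c) there are two genuine gaps. First, your filtration is set up backwards. Since $\der_{I,J}\neq 0$ forces $J=I\sqcup\{t\}$ with $t$ ranging over \emph{all} of $S\setminus I$, a summand with $s\notin I$ does map into the summand indexed by $I\sqcup\{s\}$; your inference ``$t\neq s$, hence $s\notin J$'' is unjustified. It is the span of the summands with $s\in I$ that is a subcomplex, and the span of those with $s\notin I$ that is the quotient. Second, and more seriously, only the piece indexed by $I\subseteq T=S\setminus\{s\}$ is an induced Deodhar complex of $(W_T,T)$ (via $\Mind_I^S=\Mind_T^S\circ\Mind_I^T$); the complementary piece consists of the modules $\Mind_{I\sqcup\{s\}}^S(R_q)$ with $I\subseteq T$, which are induced from parabolic subalgebras \emph{containing} $s$ and are not of the form $\Mind_T^S(\argu)$, so the induction hypothesis on $|S|$ cannot be applied to it. Your long exact sequence therefore has an unknown column, and the cases you defer to ``a parallel argument'' are exactly where the content lies: for finite $W$ one must exhibit the top homology $R_{-1}$ as $\bigcap_{s\in S}\kernel\bigl(\caH\to\Mind_{\{s\}}^S(R_q)\bigr)$, and for infinite $W$ whose maximal proper parabolics are all finite (e.g.\ the affine case) every piece of any such decomposition carries extra homology that must be shown to cancel via the connecting map. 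Deodhar's proof instead filters by the length of the distinguished coset representatives in $W^I$ and builds explicit splittings/contracting homotopies; some input of that kind is unavoidable and is missing from your outline. (Your base case $S=\emptyset$ is also off: the complex is defined only in non-negative degrees, so there is no term $C_{-1}$.)
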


From now on $C=(C_\bullet, \der_\bullet)$ will be called the \emph{Deodhar complex} of $\caH$.
\begin{rem}
  \label{rem:complex}
  Let $C=(C_\bullet,\der_\bullet)$ be the Deodhar complex of $\caH$.
  
  \noindent
  (a) In degree $|S|-1$, $C_{|S|-1}=\Mind_\emptyset^S(R_q)\simeq \caH$ coincides with the regular left $\caH$-module.
  
  \noindent
  (b) Let $\eps=\sum_{s\in S}\der_{S\setminus\{s\},S}\colon C_0\to R_q=\Mind_S^S(R_q)$ denote the canonical map given by~\eqref{eq:derdef2}. 
  Then the chain complex of left $\caH$-modules
  \begin{equation}
    \label{eq:trivhom}
    \xymatrix{
      \ldots\ar[r]&C_2\ar[r]^{\der_2}&C_1\ar[r]^{\der_1}&C_0\ar[r]^{\eps}&R_q\ar[r]&0}
  \end{equation}
  is acyclic for $W$ infinite.
\end{rem}

\section{Euler algebras}
\label{s:Euler}

Let $\boA$ be an associative $R$-algebra (with unit $1\in\boA$).
If $\boA$ is an associative $R$-algebra and $\lambda\in\Hom_{\Ralg}(\boA,R)$ is a homomorphism of $R$-algebras, then $(\boA, \lambda)$  will be called an \emph{augmented algebra}. 
Note that $\lambda$ defines a left $\boA$-module $R_\lambda$ which is, as $R$-module, equal to $R$ and satisfies $a.r=\lambda(a)r$ for $a\in\boA$ and $r\in R_\lambda$.

\subsection{Traces and trace functions}
\label{ss:trace}
Let $\boA$ be an associative $R$-algebra. 
A  homomorphism of $R$-modules $\tilde\tau\colon \boA\to R$ satisfying $\tilde\tau(ab)=\tilde\tau(ba)$ for all $a,b\in \boA$ will be called a \emph{trace} on $\boA$. 
Let $[\boA,\boA]=\spn_R(\{\,ab-ba\mid a,b\in \boA\,\})$, and let $\uboA$ denote the $R$-module $\boA/[\boA,\boA]$%
\footnote{In the standard literature (cf. \cite{bass--eccdg}, \cite{bass--tec}, \cite{brown--cg}) this $R$-module is denoted by $T(\boA)$.}.
Then every trace $\tilde\tau$ induces a \emph{trace function} $\tau\colon \uboA\to R$. 

Let $\caB$ be a free basis of $\boA$ as $R$-module with $1\in \caB$, and let $\tilde\mu\colon \boA\to R$ be the $R$-linear function defined by 
\begin{equation}\label{eq:mu}
\tilde\mu_\caB(b)=\begin{cases}
1&\text{ if }b=1,\\
0&\text{ otherwise.}
\end{cases}\end{equation}
If $\tilde\mu_\caB$ is a trace, then it induces the \emph{canonical trace function} $\mu_\caB\in\Hom_R(\uboA,R)$.

\subsection{Euler algebras}
\label{ss:eaec} 
Let $\boA$ be an associative $R$-algebra.
A left $\boA$-module $M$ is called \emph{of type \FP}, if it has a finite, projective resolution $(P_\bullet,\partial_\bullet^P,\eps_M)$, $\eps_M \colon P_0\to M$, by finitely generated projective left $\boA$-modules, i.e., there exists a positive integer $m$ such that $P_k=0$ for $k>m$ or $k<0$, and $P_k$ is finitely generated for all $k$.
An augmented, associative $R$-algebra $\boA=(\boA, \lambda)$ is called to be \emph{of type \FP} if the left $\boA$-module $R_\lambda$ is of type \FP. 
By definition (cf.~\S\ref{s:intro}), an $R$-Euler algebra $(\boA, \caB, \lambda)$ is an augmented $R$-algebra $(\boA,\lambda)$ of type \FP, for which the linear function $\tilde\mu\colon \boA\to R$ associated with $\caB$ is a trace (cf.~\eqref{eq:mu}).

\subsection{The Hattori-Stallings trace map}
\label{ss:hatsta}
For a finitely generated, projective, left $\boA$-module $P$ let $P^\ast=\Hom_{\boA}(P,\boA)$. 
Then $P^\ast$ carries canonically the structure of a right $\boA$-module, and it is also finitely generated and projective. 
One has a canonical isomorphism $\gamma_P\colon P^\ast\otimes_{\boA} P\longrightarrow \End_{\boA}(P)$ given by $\gamma_P(p^\ast\otimes p)(q)=p^\ast(q) p$, $p^\ast\in P^\ast$, $p,q\in P$ (cf. \cite[Chap.~I, Prop.~8.3]{brown--cg}).
The {\it evaluation map} $\eval_P\colon P^\ast\otimes_{\boA} P\to\uboA$ is given by $\eval_P(p^\ast\otimes p)=p^\ast(p)+[\boA,\boA]$. 
The map
\begin{equation}
  \label{eq:hatsta}
  \trace_P=\eval_P\circ\gamma_P^{-1}\colon\End_{\boA}(P)\longrightarrow\uboA
\end{equation}
is called the \emph{Hattori--Stallings trace map on $P$} and $r_P=\trace_P(\iid_P)\in\uboA$ is called the \emph{Hattori--Stallings rank} of $P$ (cf.~\cite[Chap.~IX.2]{brown--cg}, \cite{stallings--cg}). 
In particular, $\trace_P$ is $R$-linear, and for $f,g\in\End_{\boA}(P)$ one has 
\begin{equation}
  \label{eq:hatstatr}
  \trace_P(f\circ g)=\trace_P(g\circ f).
\end{equation}
From the elementary properties of the evaluation map one concludes that if $P_1$ and $P_2$ are two finitely generated projective left $\boA$-modules, one has
\begin{equation}
  \label{eq:addr}
  r_{P_1\oplus P_2}=r_{P_1}+r_{P_2}.
\end{equation}
Let $e \in \boA$, $e=e^2$, be an idempotent in the $R$-algebra $\boA$. 
Then  $\boA e$ is a finitely generated, projective, left $\boA$-module, and 
\begin{equation}\label{eq:r-idemp}
  r_{\boA e}=e+[\boA,\boA].
\end{equation}

\subsection{Finite, projective chain complexes}
\label{ss:ccs}
A chain complex $P=(P_\bullet, \der^P_\bullet)$ of left $\boA$-modules will be called \emph{finite} if $\{\, k\in\Z\mid P_k\not=0\,\}$ is finite and $P_k$ is finitely generated for all $k\in \Z$. 
Moreover, $P$ will be called \emph{projective}, if $P_k$ is projective for all $k$. 

For $P=(P_\bullet, \der^P_\bullet)$ and $Q=(Q_\bullet, \der^Q_\bullet)$ finite, projective chain complexes of left $\boA$-modules we denote by $(\dHom_{\boA}(P,Q)_\bullet,d_\bullet)$ the chain complex of right $\boA$-modules
\begin{equation}
  \label{eq:Chomcom}
  \dHom_{\boA}(P,Q)_k=
  \prod_{j=i+k}\Hom_{\boA}(P_i,Q_j),
\end{equation}
with differential $d_k\colon \dHom_{\boA}(P,Q)_k\to \dHom_{\boA}(P,Q)_{k-1}$ given by
\begin{equation}
  \label{eq:Chomcom2}
  (d_k(f_k))_{i,j-1}=
  \partial^Q_{j}\circ f_{i,j}-(-1)^k f_{i-1,j-1}\circ \partial_i^P,
\end{equation}
for $f_k=\sum_{j=i+k}f_{i,j}$.
In particular, $f_0=\sum_{i\in\Z} f_{i,i}\in\dHom_{\boA}(P,Q)_0$ is a chain map of degree $0$ if, and only if, $f_0\in\kernel(d_0)$, and $f_0$ is homotopy equivalent to the $0$-map if, and only if, $f_0\in\image(d_1)$ (cf. \cite[Chap.~I]{brown--cg}).
Put $\dExt^{\boA}_0(P,Q)=H_{0}(\dHom_{\boA}(P,Q))$.

Let $B=(B_\bullet,\partial_\bullet^B)$ be a finite, projective chain complex of right $\boA$-modules. 
Then $(B\utimes_{\boA} P,\partial_\bullet^{\utimes})$ denotes the complex
\begin{equation}
  \label{eq:Cdeften}
  \begin{gathered}
    (B\utimes_{\boA} P)_k=\coprod_{i+j=k} B_i\otimes_{\boA} P_j,\\
    \partial_{i+j}^{\utimes}(b_i\otimes p_j)=\partial_i^B(b_i)\otimes p_j +(-1)^i b_i\otimes \partial^P_j(p_j).
  \end{gathered}
\end{equation}
Let $\boA\dbl 0\dbr$ denote the chain complex of left $\boA$-modules concentrated in degree $0$ with $\boA\dbl 0\dbr_0=\boA$, and let $\uboA\dbl 0\dbr$ denote the chain complex of $R$-modules concentrated in degree $0$ with $\uboA\dbl 0\dbr_0=\uboA$.
Then $P^\circledast=(P^\circledast_\bullet,\partial_\bullet^{P^\circledast})= (\dHom_\boA(P,\boA\dbl 0\dbr)_\bullet,d_\bullet)$,
\begin{equation}
  \label{eq:Cdefhomcom}
  \begin{gathered}
    P^\circledast_{k}=\Hom_\boA(P_{-k},\boA),\\
    \partial_k^{P^\circledast}(p_k^\ast)(p_{1-k})=(-1)^{k+1} p^\ast_k(\partial^P_{1-k}(p_{1-k})).
  \end{gathered}
\end{equation}
is a finite, projective complex of right $\boA$-modules.
Note that the differential of the complex is chosen in such a way that the \emph{standard evaluation mapping}
\begin{equation}
  \label{eq:Csteva}
  \begin{gathered}
    \eval_P\colon
    P^\circledast \utimes_{\boA} P\longrightarrow \uboA\dbl 0\dbr,\\
    \eval_{s,t}(p^\ast_s\otimes p_t)=\delta_{s+t,0}\,\,p^\ast_s(p_t) ,
  \end{gathered}
\end{equation}
is a mapping of chain complexes.
However, the natural isomorphism
\begin{equation}
  \label{eq:Chomcomten}
  \begin{gathered}
    \gamma\colon
    \dHom_\boA(\argu_1,\boA\dbl 0\dbr)\utimes_\boA\argu_2\longrightarrow
    \dHom_\boA(\argu_1,
    \argu_2)\\
    \gamma_{s,t}(p_s^\ast\otimes_\boA q_t)(x_{-s})=(-1)^{st}\,
    p_s^\ast(x_{-s}) q_t
  \end{gathered}
\end{equation}
comes equipped with a non-trivial sign (cf. \cite[Chap.~I, Prop.~8.3(b) and Chap.~VI, \S6, Ex.~1]{brown--cg}).
In this context the \emph{Hattori--Stallings trace map} is given by
\begin{equation}
  \label{eq:tracecom}
  \trace_P=H_0(\eval_P\circ\gamma^{-1}_{P,P})\colon\dExt_0^{\boA}(P,P)\longrightarrow H_0(\uboA\dbl0\dbr) \simeq \uboA.
\end{equation}
It has the following properties:

\begin{prop}
  \label{prop:tracecc}
  Let $P=(P_\bullet,\der_\bullet^P)$ be a finite, projective complex of left $\boA$-modules, and let $[f],[g]\in\dExt_0^{\boA}(P,P)$, $f=\sum_{k\in\Z}f_k$, be homotopy classes of chain maps of degree $0$. 
  Then
  \begin{itemize}
  \item[(a)] $\trace_P([f])=\sum_{k\in\Z}(-1)^k \trace_{P_k}(f_k)$;
  \item[(b)] $\trace_P([f]\circ [g])=\trace_P([g]\circ [f])$.
  \item[(c)] Let $Q=(Q_\bullet,\der_\bullet^Q)$ be another finite, projective complex of left $\boA$-modules which is homotopy equivalent to $P$, i.e., there exist chain maps $\phi\colon P\to Q$, $\psi\colon Q\to P$, which composites are homotopy equivalent to the respective identity maps. 
    Let $[h]\in\dExt_0^{\boA}(Q,Q)$ such that $[\phi]\circ [f]=[h]\circ[\phi]$.  
    Then $\trace_P([f])=\trace_Q([h])$.
  \end{itemize}
\end{prop}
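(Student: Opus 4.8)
The plan is to reduce each assertion to the corresponding statement about the honest (non-complex) Hattori--Stallings trace via the two comparison maps $\eval_P$ of \eqref{eq:Csteva} and $\gamma_{P,P}$ of \eqref{eq:Chomcomten}, keeping careful track of the Koszul-type signs. For \textbf{(a)}, I would first note that a degree-$0$ chain map $f=\sum_k f_{k,k}$ corresponds under $\gamma_{P,P}^{-1}$ to an element of $\bigl(P^\circledast\utimes_\boA P\bigr)_0=\coprod_k \Hom_\boA(P_{-k},\boA)\otimes_\boA P_{-k}$, whose $k$-th summand is $\gamma_{-k,-k}^{-1}(f_{-k,-k})$; applying $\eval_P$ in degree $0$ and then $H_0$, the sign $(-1)^{st}$ with $s=t=-k$ in \eqref{eq:Chomcomten} contributes $(-1)^{k^2}=(-1)^k$, and the $k$-th summand of the total evaluation is precisely $\trace_{P_{-k}}(f_{-k,-k})$. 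Re-indexing $k\mapsto -k$ turns this into $\sum_k (-1)^k\trace_{P_k}(f_k)$. The only subtlety is to confirm that passing to $H_0$ does not lose information here: the image of $\eval_P\circ\gamma^{-1}_{P,P}$ already lands in $\uboA\dbl0\dbr$, a complex concentrated in degree $0$, so $H_0$ on the target is the identity, and on the source it just restricts a cycle representing $[f]$.

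For \textbf{(b)}, the cleanest route is to invoke the additive, degreewise formula from (a): write $\trace_P([f]\circ[g]) = \sum_k (-1)^k \trace_{P_k}((f\circ g)_k)$, expand $(f\circ g)_k = \sum_{i+j=k} f_j\circ g_i$ restricted appropriately --- but since $f,g$ have degree $0$ the only contribution is $f_k\circ g_k$ with $f_k,g_k\in\End_\boA(P_k)$ --- and then apply \eqref{eq:hatstatr} termwise: $\trace_{P_k}(f_k\circ g_k)=\trace_{P_k}(g_k\circ f_k)$. Summing gives $\trace_P([g]\circ[f])$. One should double-check that $[f]\circ[g]$ is well defined on homotopy classes (it is: homotopies compose with chain maps on either side), so the statement makes sense.

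For \textbf{(c)}, I would argue by naturality of $\eval$ and $\gamma$ together with part (b). The homotopy equivalences $\phi\colon P\to Q$, $\psi\colon Q\to P$ induce maps on $\dExt_0$ and a commutative (up to homotopy) diagram relating $\trace_P$ and $\trace_Q$ via $[\phi],[\psi]$. Concretely, from $[\phi]\circ[f]=[h]\circ[\phi]$ and $[\psi]\circ[\phi]=[\iid_P]$, $[\phi]\circ[\psi]=[\iid_Q]$ one gets $[h]=[\phi]\circ[f]\circ[\psi]$ in $\dExt_0^{\boA}(Q,Q)$ (interpreting the composition as the one on homotopy classes of maps between the two complexes, which is legitimate since everything is finite and projective). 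Then $\trace_Q([h])=\trace_Q([\phi]\circ([f]\circ[\psi]))=\trace_Q(([f]\circ[\psi])\circ[\phi])$ by part (b), and $[f]\circ[\psi]\circ[\phi]=[f]\circ[\iid_P]=[f]$, hence $\trace_Q([h])=\trace_P([f])$ --- here the middle step also needs the ``mixed'' trace identity $\trace_Q(u\circ v)=\trace_P(v\circ u)$ for $u\colon P\to Q$, $v\colon Q\to P$, which follows from exactly the same Hattori--Stallings bookkeeping as (b), since $\eval$ and $\gamma$ are natural in both variables.

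The step I expect to be the main obstacle is \textbf{(a)}: getting the sign $(-1)^k$ to come out correctly requires reconciling three sources of signs --- the $(-1)^{st}$ in $\gamma$ of \eqref{eq:Chomcomten}, the $(-1)^i$ in the tensor-product differential of \eqref{eq:Cdeften}, and the grading conventions in $P^\circledast$ of \eqref{eq:Cdefhomcom} --- and verifying that $\eval_P\circ\gamma^{-1}_{P,P}$ is genuinely a chain map (equivalently, that its $H_0$ is well defined on homotopy classes) rather than merely a map in degree $0$. Once the sign computation for a single homogeneous piece is pinned down, \textbf{(b)} and \textbf{(c)} are formal consequences of \eqref{eq:hatstatr} and naturality.
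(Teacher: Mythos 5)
Your proposal is correct. For parts (a) and (b) it follows the paper exactly: the paper's proof of (a) is literally ``a direct consequence of \eqref{eq:Chomcomten}'', i.e.\ of the sign $(-1)^{st}$ in $\gamma$, which you work out explicitly (with one harmless index slip: the degree-zero summand of $P^\circledast\utimes_{\boA}P$ containing $f_{-k,-k}$ is $P^\circledast_{k}\otimes_{\boA}P_{-k}$, so the relevant sign is $(-1)^{k\cdot(-k)}=(-1)^{k}$ rather than $(-1)^{(-k)(-k)}$ --- same value); and (b) is obtained in both cases from (a) together with \eqref{eq:hatstatr}. For (c) your route differs mildly in form from the paper's: the paper compares $\trace_P$ and $\trace_Q$ directly via the diagram \eqref{dia:mass}, whose left square commutes on the nose (naturality of $\gamma$ under $\phi\circ\argu\circ\psi$ and $\psi^\circledast\otimes\phi$) and whose right square commutes up to homotopy (because $\psi\circ\phi$ is homotopic to $\iid_P$), giving $\trace_Q([\phi]\circ[f]\circ[\psi])=\trace_P([f])$. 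You instead extract $[h]=[\phi]\circ[f]\circ[\psi]$ from the hypotheses and invoke a mixed-variance trace identity $\trace_Q([u]\circ[v])=\trace_P([v]\circ[u])$ for $u\colon P\to Q$, $v\colon Q\to P$. That identity is not stated in the paper (\eqref{eq:hatstatr} concerns endomorphisms of a single module only), but it is standard and, as you note, follows from the same naturality of $\eval$ and $\gamma$ in both variables; it is precisely what the paper's diagram encodes. The two arguments for (c) are thus equivalent repackagings --- yours isolates a reusable algebraic lemma, the paper's avoids stating it by drawing the diagram once. No genuine gaps.
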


\begin{proof}
  Part (a) is a direct consequence of \eqref{eq:Chomcomten}, and (b) follows from (a) and \eqref{eq:hatstatr}.

  \noindent
  The left hand side quadrangle in the diagram
  \begin{equation}
    \label{dia:mass}
    \xymatrix{
      \dHom_{\boA}(P,P) \ar[d]_{\phi\circ\argu\circ\psi}& \ar[l]_-{\gamma}
      P^\circledast\utimes_{\boA} P\ar[r]^-{\eval_P}\ar[d]^{\psi^\circledast\otimes\phi}&
      \uboA\dbl 0\dbr\ar@{=}[d]\\
      \dHom_{\boA}(Q,Q)& \ar[l]_-{\gamma} 
      Q^\circledast\utimes_{\boA} Q\ar[r]^-{\eval_Q}&
      \uboA\dbl 0\dbr\\
    }
  \end{equation}
  commutes, and the right hand side quadrangle commutes up to homotopy equivalence. This yields claim~(c).
\end{proof}

Let $P=(P_\bullet,\partial_\bullet^P)$ be a finite, projective complex of left $\boA$-modules. 
Then one defines the \emph{Hattori--Stallings rank} of $P$ by
\begin{equation}
  \label{eq:hatstacom}
  r_P=\trace_P([\iid_P])=\textstyle{\sum_{k\in\Z}  (-1)^k r_{P_k}}\in\uboA.
\end{equation}
Proposition~\ref{prop:tracecc} implies that if $Q=(Q_\bullet,\partial_\bullet^Q)$ is another finite, projective, complex of left $\boA$-modules which is homotopy equivalent to $P$ then $r_P=r_Q$.

Let $\caK(\boA)$ denote the additive category the objects of which are finite, projective chain complexes of left $\boA$-modules.
Morphisms $\Hom_{\caK(\boA)}(P,Q)= \dExt_0^\boA(P,Q)$ are given by the homotopy classes of chain maps of degree $0$.
In particular, $\caK(\boA)$ is a \emph{triangulated category} and distinguished triangles are triangles isomorphic to the cylinder/cone triangles 
(cf. \cite{gelfand-manin--ha}, \cite[Chap.~10]{weibel--iha}). 
Thus, if
\begin{equation}
  \label{eq:distri}
  \xymatrix{
    A\ar[r]& B\ar[r]& C\ar[r]& A[1]
  }
\end{equation}
is a distinguished triangle in $\caK(\boA)$, one has $r_B=r_A+r_C$.

Let $M$ be a left $\boA$-module of type \FP, and let $(P_\bullet,\der_\bullet,\eps_M)$ be a finite, projective resolution.
In particular, $P=(P_\bullet,\der_\bullet)$ is a finite, projective chain complex of left $\boA$-modules.
One defines the Hattori--Stallings rank of $M$ by $r_M=r_P\in\uboA$.
The comparison theorem in homological algebra implies that this element is well defined.
The following property will be useful for our purpose.

\begin{prop}
  \label{prop:FPcc}
  Let $C=(C_\bullet,\der^C_\bullet)$ be a chain complex of left $\boA$-modules concentrated in non-negative degrees with the following properties:
  \begin{itemize}
  \item[(a)] $C$ has homology concentrated in degree zero, i.e., $H_k(C)=0$ for $k\in\Z$, $k>0$;
  \item[(b)] $C$ is finitely supported, i.e., $C_k=0$ for almost all $k\in\Z$;
  \item[(c)] $C_k$ is of type \FP\ for all $k\in\Z$.
  \end{itemize}
  Then $H_0(C)$ is of type \FP, and one has
  \begin{equation}
    \label{eq:idCs}
    r_{H_0(C)}=\textstyle{\sum_{k\geq 0} (-1)^k r_{C_k}\in\uboA.}
  \end{equation}
\end{prop}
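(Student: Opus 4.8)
The statement is a standard "Euler characteristic of an acyclic complex" argument, adapted to the Hattori--Stallings rank in $\uboA$. The plan is to induct on the number of nonzero terms of $C$, using the fact that a short exact sequence of type $\FP$ modules behaves additively on Hattori--Stallings ranks, together with the horseshoe lemma to produce finite projective resolutions. First I would record the reductions: by hypothesis (b) there is some $n$ with $C_k=0$ for $k>n$, so I argue by induction on $n$. The base case $n=0$ is trivial, since then $C$ is just $H_0(C)=C_0$ concentrated in degree $0$, which is of type $\FP$ by (c), and \eqref{eq:idCs} reads $r_{H_0(C)}=r_{C_0}$.

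For the inductive step, let $Z=\kernel(\der^C_{n-1})\subseteq C_{n-1}$; since $C$ is acyclic in positive degrees, the truncated complex $0\to C_n\to C_{n-1}\to\cdots$ has $\der^C_n$ injective with image $Z$, so $C_n\cong Z$ and $Z$ is of type $\FP$. Replacing $C_{n-1}$ by $C_{n-1}/Z$ — equivalently forming the quotient complex $C'$ with $C'_k=C_k$ for $k\le n-2$, $C'_{n-1}=C_{n-1}/Z$, and $C'_k=0$ for $k\ge n$ — yields a complex $C'$ which is again acyclic in positive degrees with $H_0(C')=H_0(C)$, is finitely supported with top term in degree $n-1$, and has all terms of type $\FP$: the only new term $C_{n-1}/Z$ sits in the short exact sequence $0\to Z\to C_{n-1}\to C_{n-1}/Z\to 0$ with the two outer terms of type $\FP$, hence (by the horseshoe lemma one builds a finite projective resolution of the quotient from finite projective resolutions of the other two) $C_{n-1}/Z$ is of type $\FP$ as well, and moreover $r_{C_{n-1}/Z}=r_{C_{n-1}}-r_Z=r_{C_{n-1}}-r_{C_n}$ by additivity of $r_{(\argu)}$ along short exact sequences of type-$\FP$ modules. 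By the inductive hypothesis applied to $C'$, $H_0(C)=H_0(C')$ is of type $\FP$ and
\[
r_{H_0(C)}=\sum_{k=0}^{n-2}(-1)^k r_{C_k}+(-1)^{n-1}r_{C_{n-1}/Z}
=\sum_{k=0}^{n-2}(-1)^k r_{C_k}+(-1)^{n-1}r_{C_{n-1}}+(-1)^{n}r_{C_n}
=\sum_{k\ge 0}(-1)^k r_{C_k},
\]
which is \eqref{eq:idCs}.

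The one genuine point to justify carefully — and the step I expect to be the main obstacle — is the \emph{additivity of the Hattori--Stallings rank along a short exact sequence} $0\to M'\to M\to M''\to 0$ of type-$\FP$ left $\boA$-modules, i.e. $r_M=r_{M'}+r_{M''}$ in $\uboA$. The clean way to get this within the framework already set up in \S\ref{ss:ccs} is to observe that such a short exact sequence gives a distinguished triangle in $\caK(\boA)$: using the horseshoe lemma, choose finite projective resolutions $P'\to M'$ and $P''\to M''$ and lift to a finite projective resolution $P\to M$ fitting in a short exact sequence of complexes $0\to P'\to P\to P''\to 0$ which is degreewise split; this is exactly a cone triangle $P'\to P\to P''\to P'[1]$ in $\caK(\boA)$, so the displayed consequence of triangulatedness, $r_B=r_A+r_C$ for a distinguished triangle $A\to B\to C\to A[1]$, gives $r_P=r_{P'}+r_{P''}$, i.e. $r_M=r_{M'}+r_{M''}$. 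The only care needed is that all three resolutions can be taken finite simultaneously: finiteness of $P'$ and $P''$ forces $P$ to have projective dimension at most $\max(\operatorname{pd}M',\operatorname{pd}M''+1)<\infty$ and finitely generated terms in each degree, so $P$ is finite; alternatively one may invoke that type $\FP$ is closed under extensions, which is the homological-algebra fact underlying the horseshoe construction. With this lemma in hand, every quotient-complex step above preserves the type-$\FP$ hypothesis and transforms the alternating sum of ranks as claimed, and the induction closes.
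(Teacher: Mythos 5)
Your argument is correct in substance and is essentially the mirror image of the paper's proof: the paper inducts on the length of $C$ by splitting off the \emph{bottom} term, using the short exact sequence $0\to M\to C_0\to H_0(C)\to 0$ with $M=H_0(C^\wedge[-1])$ handled by the inductive hypothesis, whereas you split off the \emph{top} term via $0\to C_n\to C_{n-1}\to C_{n-1}/Z\to 0$; both reduce to the same key fact about short exact sequences of type-$\FP$ modules, so nothing genuinely new is gained or lost. Two small repairs are needed. First, define $Z=\image(\der^C_n)$ rather than $Z=\kernel(\der^C_{n-1})$: these agree for $n\geq 2$ by acyclicity, but at the last inductive step $n=1$ one has $\kernel(\der^C_0)=C_0$, which would destroy $H_0$; with $Z=\image(\der^C_1)$ the step $n=1$ produces $C'_0=H_0(C)$ as intended. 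Second, the direction of the two-out-of-three property you actually need is ``sub and middle of type $\FP$ $\Rightarrow$ quotient of type $\FP$ and $r_{M''}=r_M-r_{M'}$'', and this is \emph{not} the horseshoe lemma (which resolves the middle term from the outer two); the correct tool is the one the paper uses, namely the mapping cone $\Cone(\alpha_\bullet)$ of a lift $\alpha_\bullet\colon P'\to P$ of the inclusion to finite projective resolutions, which is a finite projective resolution of the quotient and yields $r_{M''}=r_P-r_{P'}$ directly. Your closing paragraph's horseshoe-plus-distinguished-triangle argument is a valid proof of additivity once all three modules are already known to be of type $\FP$, but it cannot establish the $\FP$-ness of $C_{n-1}/Z$ in the first place, so the cone construction must come first. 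With these adjustments the induction closes as you describe.
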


\begin{proof}
  Let $\ell(C)=\min\{\,n\geq 0\mid C_{n+j}=0\ \text{for all $j\geq 0$}\,\}$ denote the length of $C$. 
  We proceed by induction on $\ell(C)$. 
  For $\ell(C)=1$, there is nothing to prove.
  Suppose the claim holds for chain complexes $D$, $\ell(D)\leq \ell-1$, satisfying the hypotheses (a)--(c), and let $C$ be a complex satisfying (a)--(c) with $\ell(C)=\ell$.
  Let $C^\wedge$ be the chain complex coinciding with $C$ in all degrees $k\in\Z\setminus\{0\}$ and satisfying $C^\wedge_0=0$. 
  Then $C^\wedge[-1]$ satisfies (a)--(c) and $\ell(C^\wedge[-1])\leq \ell-1$.
  Then, by induction, $M=H_1(C^\wedge)=H_0(C^\wedge [-1])$ is of type \FP, and $r_M=\sum_{k\geq 1} (-1)^{k+1} r_{C_k}$. 
  By construction, one has a short exact sequence of left $\boA$-modules $0\to M\overset{\alpha}{\to} C_0\to H_0(C)\to 0$.
  Let $(P_\bullet,\der_\bullet^P,\eps_M)$ be a finite, projective resolution of $M$, and let $(Q_\bullet,\der_\bullet^Q,\eps_{C_0})$ be a finite, projective resolution of $C_0$. 
  By the comparison theorem in homological algebra, there exists a chain map $\alpha_\bullet\colon P_\bullet \to Q_\bullet$ inducing $\alpha$. 
  Let $\Cone(\alpha_\bullet)$ denote the mapping cone of $\alpha_\bullet$.
  Then $(\Cone(\alpha_\bullet),\tilde{\der}_\bullet,\eps_\ast)$ is a finite, projective resolution of $H_0(C)$, i.e., $H_0(C)$ is of type \FP.
  Moreover, by the remark following \eqref{eq:distri} one has
  \begin{equation}
    \label{eq:indsum}
    r_{H_0(C)}=r_{\Cone(\alpha_\bullet)}=r_Q-r_P=r_{C_0}-r_M.
  \end{equation}
  This yields the claim.
\end{proof}

\subsection{The Euler characteristic of an Euler algebra}
\label{ss:eueuc}
Let $\boA=(\boA,\caB,\lambda)$ be an Euler $R$-algebra with canonical trace function $\mu_\caB\in\Hom_R(\uboA,R)$. 
The \emph{Euler characteristic} of $\boA$ is defined by
\begin{equation}
  \label{eq:eucharA}
  \chi_{(\boA,\caB,\lambda)}=\mu_\caB(r_{R_\lambda})\in R.
\end{equation}

\subsection{Induction}
\label{ss:ind}
Let $\boB\subseteq \boA$ be an $R$-subalgebra of $\boA$.
The canonical injection $j\colon \boB\to\boA$ induces a canonical map
\begin{equation}
  \label{eq:traceBA}
  \trace_{\boB/\boA}\colon\uboB\to\uboA.
\end{equation}
Induction $\Mind_{\boB}^{\boA} =\boA\otimes_{\boB}\argu$ is a covariant additive right-exact functor mapping finitely generated projective left $\boB$-modules to finitely generated projective left $\boA$-modules.
Moreover, if $\boA$ is a flat right $\boB$-module, then $\Mind_{\boB}^{\boA}$ is exact. 
Let $P$ be a finitely generated left $\boB$-module, and let $Q=\Mind_{\boB}^{\boA}(P)$. 
Then one has a canonical map $\iota\colon P\to Q$, $\iota(p)=1\otimes p$, which is a homomorphism of left $\boB$-modules.
As induction is left adjoint to restriction, every map $f\in\End_{\boB}(P)$ induces a map $\iota_\circ(f)=(\iota\circ f)_\ast\in\End_{\boA}(Q)$.

Let $P^\ast=\Hom_\boB(P,\boB)$ and $Q^\ast=\Hom_{\boA}(Q,\boA)$.
Then for $f\in P^\ast$ one has an induced map $\iota_\ast(f)=(j\circ f)_\ast\in Q^\ast$ making the diagram
\begin{equation}
  \xymatrix{
\End_{\boB}(P)\ar[d]_{\iota_\circ}&P^\ast\otimes_{\boB} P
\ar[l]_-{\gamma_P}\ar[d]^{\iota_\ast\otimes\iota}\ar[r]^-{\eval_P} &\uboB\ar[d]^{\trace_{\boB/\boA}}\\
\End_{\boA}(Q)&Q^\ast\otimes_{\boA}Q
\ar[l]_-{\gamma_Q}\ar[r]^-{\eval_Q}&\uboA
  }
\end{equation}
commute. This shows the following.

\begin{prop}
  \label{prop:ind}
  Let $\boB\subseteq\boA$ be an $R$-subalgebra of $\boA$ such that $\boA$ is a flat right $\boB$-module, and let $M$ be a left $\boB$-module of type \FP.
  Then $\Mind_{\boB}^{\boA}(M)$ is of type \FP, and one has
  \begin{equation}
    \label{eq:Mindr}
    r_{\Mind^{\boA}_{\boB}(M)}=\trace_{\boB/\boA}(r_M).
  \end{equation}
\end{prop}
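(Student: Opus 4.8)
The plan is to verify that the two quadrangles displayed just before the statement genuinely commute, and then to read off both assertions by comparing Hattori--Stallings data for $M$ and for $\Mind_{\boB}^{\boA}(M)$ via a chosen finite projective resolution. First I would establish the type \FP\ claim: pick a finite projective resolution $(P_\bullet,\der_\bullet^P,\eps_M)$ of $M$ by finitely generated projective left $\boB$-modules; since $\boA$ is flat over $\boB$, the functor $\Mind_{\boB}^{\boA}$ is exact (as recalled in \S\ref{ss:ind}), so $(\boA\otimes_{\boB}P_\bullet,\iid\otimes\der_\bullet^P,\iid\otimes\eps_M)$ is again exact; and it consists of finitely generated projective left $\boA$-modules because $\Mind_{\boB}^{\boA}$ sends finitely generated projectives to finitely generated projectives. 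Hence $Q=\Mind_{\boB}^{\boA}(M)$ is of type \FP, with $Q_k=\Mind_{\boB}^{\boA}(P_k)$ a finite projective resolution.

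Next I would treat the rank identity one homological degree at a time and then sum with signs. Fix a finitely generated projective left $\boB$-module $P$ and set $Q=\Mind_{\boB}^{\boA}(P)$. The content is exactly that the two quadrangles commute: the left-hand square says $\gamma_Q\circ(\iota_\ast\otimes\iota)=\iota_\circ\circ\gamma_P$, and the right-hand square says $\eval_Q\circ(\iota_\ast\otimes\iota)=\trace_{\boB/\boA}\circ\eval_P$. The right-hand square is immediate from the definitions: for $p^\ast\in P^\ast$ and $p\in P$, $\eval_Q(\iota_\ast(p^\ast)\otimes\iota(p))=(j\circ p^\ast)(p)+[\boA,\boA]=j(p^\ast(p))+[\boA,\boA]=\trace_{\boB/\boA}(p^\ast(p)+[\boB,\boB])$. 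For the left-hand square one checks both sides on an element $q\in Q$; by $\boA$-linearity it suffices to check on $q=1\otimes p'$ for $p'\in P$, and both composites send $p^\ast\otimes p$ to the $\boA$-endomorphism $1\otimes p'\mapsto (j\circ p^\ast)(p')\cdot(1\otimes p)$, using the adjunction description $\iota_\circ(f)=(\iota\circ f)_\ast$. Consequently, chasing $\iid_P\in\End_{\boB}(P)$ through the diagram (note $\iota_\circ(\iid_P)=\iid_Q$) gives $r_Q=\trace_Q(\iid_Q)=\trace_{\boB/\boA}(\trace_P(\iid_P))=\trace_{\boB/\boA}(r_P)$.

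Applying this in each degree to $P=P_k$, $Q=Q_k$, and using that $\trace_{\boB/\boA}$ is an $R$-module homomorphism (so it commutes with the alternating sum), I get
\begin{equation*}
r_{\Mind_{\boB}^{\boA}(M)}=r_{Q_\bullet}=\sum_{k\in\Z}(-1)^k r_{Q_k}=\sum_{k\in\Z}(-1)^k\trace_{\boB/\boA}(r_{P_k})=\trace_{\boB/\boA}\Bigl(\sum_{k\in\Z}(-1)^k r_{P_k}\Bigr)=\trace_{\boB/\boA}(r_M),
\end{equation*}
which is the claimed formula (the paper's display has the sub/superscripts on $\Mind$ transposed, but the statement is about $\Mind_{\boB}^{\boA}$). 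The one point that genuinely needs care — the main obstacle, such as it is — is the commutativity of the left-hand quadrangle involving $\gamma_P$ and $\gamma_Q$: one must be scrupulous about the identification $P^\ast\otimes_{\boB}P\cong\End_{\boB}(P)$ versus $Q^\ast\otimes_{\boA}Q\cong\End_{\boA}(Q)$ and about how the adjunction isomorphism $\Hom_{\boA}(Q,\boA)\cong\Hom_{\boB}(P,\boA)$ interacts with the map $\iota_\ast$, so that no phantom base-change discrepancy creeps in. Everything else is formal manipulation with evaluation maps and the additivity \eqref{eq:addr}.
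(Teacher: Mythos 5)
Your proposal is correct and follows essentially the same route as the paper, whose proof consists precisely of the commutative diagram displayed before the statement; you simply make explicit the verification of the two quadrangles, the exactness argument for the type \FP\ claim, and the degreewise summation. Nothing further is needed.
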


Let $(\boA, \caB,\lambda)$ be an augmented, associative, $R$-algebra with a distinguished free $R$-basis $\caB$ such that the map $\tilde\mu_\caB$ defined in \eqref{eq:mu} is a trace. 
Let $\boB\subseteq\boA$ be an $R$-subalgebra of $\boA$ such that
\begin{itemize}
\item[(i)] $\boA$ is a flat right $\boB$-module;
\item[(ii)] the $R$-module $\boB$ is generated by $\caC=\caB\cap\boB$.
\end{itemize}
Then $(\boB,\caC, \lambda\vert_\boB)$ is an augmented, associative, $R$-algebra with a distinguished $R$-basis $\caC$ containing $1$ such that the map $\tilde\mu_\caC$ is a trace.
Let $\mu_\caB\colon\uboA\to R$ and $\mu_\caC\colon\uboB\to R$ denote the associated trace functions. 
Then one has a commutative diagram
\begin{equation}
  \label{eq:diatritr}
  \xymatrix{
    \uboB\ar[0,2]^{\trace_{\boB/\boA}}\ar[dr]_{\mu_\caC}&&\uboA\ar[dl]^{\mu_{\caB}}\\
    &R&
  }
\end{equation}
implying the following direct consequence of Proposition~\ref{prop:ind}.

\begin{cor}
  \label{cor:ind}
  Let $(\boA, \caB,\lambda)$ be an augmented, associative $R$-algebra with a distinguished $R$-basis $\caB$ such that the map $\tilde\mu_\caB$ is a trace, and let $\boB\subseteq\boA$ be an $R$-subalgebra satisfying {\rm (i)--(ii)}. 
  Let $M$ be a left $\boB$-module of type FP. 
  Then $\mu_\caC(r_M)=\mu_\caB(r_{\Mind_\boB^\boA(M)})$.
\end{cor}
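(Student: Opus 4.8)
The plan is to simply assemble the three ingredients already in place: Proposition~\ref{prop:ind}, which gives $r_{\Mind_\boB^\boA(M)}=\trace_{\boB/\boA}(r_M)$ once one knows $\Mind_\boB^\boA(M)$ is again of type \FP, and the commutative diagram \eqref{eq:diatritr}, which says $\mu_\boA\circ\trace_{\boB/\boA}=\mu_\boB$. First I would check that the hypotheses (i)--(iii) on $\boB\subseteq\boA$ are exactly what is needed to invoke Proposition~\ref{prop:ind}: condition (i) is the flatness assumption appearing there verbatim, so $\Mind_\boB^\boA$ is exact and preserves finitely generated projectives, and $M$ being of type \FP\ over $\boB$ therefore yields a finite projective resolution of $\Mind_\boB^\boA(M)$ over $\boA$ by applying $\Mind_\boB^\boA$ termwise. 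Hence $\Mind_\boB^\boA(M)$ is of type \FP\ and $r_{\Mind_\boB^\boA(M)}=\trace_{\boB/\boA}(r_M)$.

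Next I would justify that $(\boB,\argu^\antip,\lambda\vert_\boB,\caC)$ genuinely satisfies the hypothesis of Lemma~\ref{lem:trace}, so that the canonical trace $\mu_\boB$ is defined and the diagram \eqref{eq:diatritr} makes sense. Condition (i) of Lemma~\ref{lem:trace} ($1\in\caC$) follows from $1\in\caB$ (by that lemma's hypothesis on $\boA$) together with $1\in\boB$; condition (ii) ($\caC^\antip=\caC$) follows from $\caB^\antip=\caB$ and $\boB^\antip=\boB$, i.e.\ our assumption (ii); and condition (iii), the associativity relation \eqref{eq:asso} for the restricted form, is inherited from the same relation on $\boA$, since $\caC\subseteq\caB$ and the form $\langle\argu,\argu\rangle$ on $\boB$ is just the restriction of the one on $\boA$ (using (iii), i.e.\ $\caC=\caB\cap\boB$, to see the two Kronecker forms agree on $\boB\times\boB$). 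So $\mu_\boB(a)=\langle 1,a\rangle=\mu_\boA(a)$ for $a\in\boB$, which is precisely the commutativity of \eqref{eq:diatritr}.

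Finally I would chain the two: $\mu_\boB(r_M)=\mu_\boA(\iota_*(r_M))$ where $\iota_*=\trace_{\boB/\boA}$, by \eqref{eq:diatritr}, and $\trace_{\boB/\boA}(r_M)=r_{\Mind_\boB^\boA(M)}$ by the first step; hence $\mu_\boB(r_M)=\mu_\boA(r_{\Mind_\boB^\boA(M)})$, which is the assertion. I do not anticipate a genuine obstacle here: this corollary is purely formal, a concatenation of Proposition~\ref{prop:ind} with the triangle \eqref{eq:diatritr}, and the only point requiring any care is the bookkeeping verification that the restricted data $(\boB,\argu^\antip,\lambda\vert_\boB,\caC)$ lands in the domain of Lemma~\ref{lem:trace} — which is exactly what conditions (i)--(iii) were designed to guarantee, and which the paragraph preceding the corollary already asserts.
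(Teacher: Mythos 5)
Your proof is correct and follows exactly the paper's route: the paper presents this corollary as an immediate consequence of Proposition~\ref{prop:ind} combined with the commutative triangle \eqref{eq:diatritr}, which is precisely the concatenation you carry out. Your additional verification that $(\boB,\argu^\antip,\lambda\vert_\boB,\caC)$ satisfies the hypotheses of Lemma~\ref{lem:trace} is a correct elaboration of what the paper asserts without proof in the paragraph preceding the corollary.
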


\section{The Euler characteristic of a Hecke algebra}
\label{s:trhec}

\subsection{The canonical trace of a Hecke algebra}
\label{ss:cantr}
As a corollary of the proof of \cite[Prop. 8.1.1]{geck-pfeiffer--cfcgiha}, one obtains the following property.

\begin{prop}
  \label{prop:mar}
  Let $\caH$ be the $R$-Hecke algebra associated with the finitely generated Coxeter group $(W,S)$. 
  Let $\caB=\{\, T_w\mid w\in W\,\}$ be the standard basis, and let $\tilde\mu=\tilde\mu_\caB$ be the associated $R$-linear map given by \eqref{eq:mu}.
  Then, $\tilde\mu$ is a trace.
\end{prop}

\begin{rem}
  \label{rem:trace}
  The trace $\tilde\mu\colon\caH\to R$ can be seen as the \emph{canonical trace} on $\caH$. 
  It is straightforward to verify that for Hecke algebras of type $A_n$, $B_n$ or $D_n$ this trace coincides with the Jones--Ocneanu trace evaluated in $0$ (cf.~\cite{geck:trace}).
\end{rem}

\subsection{Properties of the Deodhar complex}
\label{ss:coxcomp}
Let $(W,S)$ be a spherical Coxeter group, and let $q\in R$ be such that $p_{(W,S)}(q)\in R^\times$. 
Then $R_q\simeq \caH e_S$ (cf. Prop.~\ref{prop:eI}); in particular, $R_q$ is a projective left $\caH$-module.
This shows that for any Coxeter group $(W,S)$ and $I\subseteq S$ such that $W_I$ is finite, the left $\caH$-module $\Mind_{\caH_I}^\caH(R_q)$ is finitely generated and projective.
As a consequence one has the following (cf. \cite[\S 6.8]{humphreys--rgcg}):

\begin{prop}
  \label{prop:latt}
  Let $(W,S)$ be a finitely generated Coxeter group, which is either affine or co-compact hyperbolic (cf. \cite[Ch. 6]{humphreys--rgcg}), and let $q\in R$ be such that $p_{(W_I,I)}(q)\in R^\times$ for any proper parabolic subgroup $(W_I,I)$.
  Then the Deodhar complex $(C_\bullet,\der_\bullet,\eps)$ together with the map $\eps\colon C_0\to R_q$ (cf. Rem.~\ref{rem:complex}) is a finite, projective resolution of $R_q$.
\end{prop}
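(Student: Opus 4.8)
The plan is to verify the three defining properties of a finite, projective resolution for the augmented Deodhar complex
\[
\xymatrix{
\ldots\ar[r]&C_2\ar[r]^{\der_2}&C_1\ar[r]^{\der_1}&C_0\ar[r]^{\eps}&R_q\ar[r]&0,}
\]
namely: (1) every term is finitely generated and projective; (2) the complex has only finitely many nonzero terms; (3) it is acyclic (has trivial homology).

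First I would dispose of (2), which is immediate: by definition $C_k=\coprod_{\deg(I)=k}\Mind_I^S(R_q)$ and $\deg(I)=|S|-|I|-1$ ranges only over $\{-1,\dots,|S|-1\}$, so $C_k=0$ unless $0\le k\le |S|-1$; since $(W,S)$ is finitely generated, $|S|<\infty$ and the complex is finite. Next I would establish (1). Each $C_k$ is a finite coproduct of modules $\Mind_I^S(R_q)$ with $I\subsetneq S$ (note $\deg(I)\ge 0$ forces $I\ne S$), so it suffices to show each $\Mind_I^S(R_q)=\Mind_{\caH_I}^{\caH}(R_q)$ is finitely generated and projective over $\caH$. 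The hypothesis says $(W,S)$ is affine or co-compact hyperbolic and $p_{(W_I,I)}(q)\in R^\times$ for every proper parabolic; in both the affine and the co-compact hyperbolic case every \emph{proper} parabolic subgroup $W_I$ is \emph{finite} (this is the key geometric input, cf.\ \cite[\S6.8]{humphreys--rgcg}), so $W_I$ finite and $p_{(W_I,I)}(q)$ invertible put us exactly in the situation of Proposition~\ref{prop:eI}. By part (c) of that proposition, $\Mind_I^S(R_q)\cong\caH e_I$ is a finitely generated, projective left $\caH$-module. Hence each $C_k$ is finitely generated and projective, and by Remark~\ref{rem:complex}(a) the top term $C_{|S|-1}\cong\caH$ is the free module of rank one.

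For (3), acyclicity of the augmented complex, I would invoke Theorem~\ref{thm:complex}. Since $(W,S)$ is affine or co-compact hyperbolic it is in particular infinite, so Theorem~\ref{thm:complex}(c) gives that $C=(C_\bullet,\der_\bullet)$ is acyclic with $H_0(C)\cong R_q$; equivalently, by Remark~\ref{rem:complex}(b), the augmented complex in \eqref{eq:trivhom} has trivial homology. It remains only to check that the connecting map $\eps\colon C_0\to R_q$ realizing $H_0(C)\cong R_q$ is the canonical map described via \eqref{eq:derdef2}, i.e.\ that the isomorphism $H_0(C)\xrightarrow{\sim}R_q=\Mind_S^S(R_q)$ is induced by the family of maps $d_I^S$ with appropriate signs; this is precisely how $\eps$ was defined in Remark~\ref{rem:complex}(b), and Theorem~\ref{thm:complex} is stated for that specific augmentation, so there is nothing further to verify. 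Assembling the three points, $(C_\bullet,\der_\bullet,\eps)$ is a finite, projective resolution of $R_q$.

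The only real obstacle is the geometric fact underpinning (1): that for an affine or co-compact hyperbolic Coxeter group every proper parabolic subgroup is finite (spherical). For affine groups this is classical — proper parabolics of an irreducible affine Coxeter group are finite, and the general affine case reduces to the irreducible components — while for co-compact hyperbolic groups it is part of the definition of the relevant class in \cite[Ch.~6]{humphreys--rgcg}. Once this is granted, Proposition~\ref{prop:eI}(c) does the work of producing projectivity, and everything else is bookkeeping with the already-established Theorem~\ref{thm:complex}.
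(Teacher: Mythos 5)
Your proposal is correct and follows essentially the same route as the paper: the paper's justification is exactly the observation that for affine or co-compact hyperbolic $(W,S)$ every proper parabolic $W_I$ is finite (via \cite[\S 6.8]{humphreys--rgcg}), so Proposition~\ref{prop:eI}(c) makes each $\Mind_I^S(R_q)$ finitely generated projective, and Theorem~\ref{thm:complex}(c) with Remark~\ref{rem:complex}(b) supplies acyclicity of the augmented complex. Your write-up merely makes explicit the bookkeeping that the paper leaves implicit.
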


\subsection{The Euler characteristic of a Hecke algebra}
\label{ss:euhec}
Combining the results of \S\ref{s:Euler} with the properties of the Deodhar complex we obtain the following result.

\begin{thm}\label{thm:AA}
Let $(W,S)$ be a finitely generated Coxeter group, let $R$ be a commutative ring with unit, and let $q\in R$ be such that the Poincar\'e polynomial $p_{(W_I,I)}(q)$ is invertible in $R$ for any spherical parabolic subgroup $(W_I,I)$.
Then $(\caH,\caB,\eps_q)$, where $\caH=\caH_q(W,S)$ is the $R$-Hecke algebra associated with $(W,S)$ and parameter $q$, $\caB=\{T_w\mid w\in W\}$ and $\eps_q$ is the index representation, is an $R$-Euler algebra. 
Moreover, one has
\begin{align}
\chi_{(\caH,\caB,\eps_q)}&=(p_{(W,S)}(q))^{-1}\in R\label{eq:eu1}\\
\intertext{and, if $(W,S)$ is not spherical, then}
\chi_{(\caH,\caB,\eps_q)}&= \sum_{I\subsetneq S} (-1)^{|S\setminus I|-1}\cdot\chi_{(\caH_I,\caB\cap\caH_I,\eps_q)},\label{eq:eu2}
\end{align}
In particular, if $R =\Z\dbl q\dbr$, then $\chi_{(\caH,\caB,\eps_q)}=p_{(W,S)}(q)^{-1}$.
\end{thm}

\begin{proof}
  First we show that $\caH$ is an $R$-Euler algebra. 
  We proceed by induction on $d=|S|$.
  As $p_{(W_I,I)}(q)\in R^\times$ for any spherical parabolic subgroup $(W_I,I)$, the left $\caH$-module $\Mind_{\caH_I}^\caH(R_q)$ is finitely generated and projective. 
  For $d\leq 2$, $(W,S)$ is spherical or affine, and in this case there is nothing to prove (cf. Prop.~\ref{prop:latt}). 

  Assume that the claim holds for all Coxeter groups $(W_J,J)$ with $|J|<d$.
  If $(W,S)$ is spherical, then, by hypothesis and Proposition~\ref{prop:eI}, the left $\caH$-module $R_q$ is projective and the claim follows. 
  Therefore, we may also assume that $(W,S)$ is not spherical. 
  By induction, for $K\subsetneq S$ the left $\caH_K$-module $R_q$ is of type \FP. 
  Hence $\Mind_{\caH_K}^\caH(R_q)$ is a left $\caH$-module of type \FP.
  Thus $C_k$ is a left $\caH$-module of type \FP\ for $0\leq k\leq d-1$, where $C=(C_\bullet,\der_\bullet)$ is the Deodhar complex of $\caH$. 
  From Theorem~\ref{thm:complex}(c) and Proposition~\ref{prop:FPcc} one concludes that $R_q$ is a left $\caH$-module of type \FP.
  Hence Proposition~\ref{prop:mar} implies that $(\caH,\caB,\eps_q)$ is an $R$-Euler algebra.

  In case that $(W,S)$ is spherical one has $R_q\simeq \caH e_S$, where $e_S$ is given as in Proposition~\ref{prop:eI}. 
  Hence $r_{R_q}=e_S+[\caH,\caH]$ (cf. \eqref{eq:r-idemp}), and thus 
  \begin{equation}\label{eq:chi-p}
    \chi_{(\caH,\caB,\eps_q)}=\mu(r_{R_q})=(p_{(W,S)}(q))^{-1}\in R,
  \end{equation}
  which yields \eqref{eq:eu1}.

  Suppose that $(W,S)$ is not spherical. 
  From \eqref{eq:idCs} and \eqref{eq:Mindr} one concludes that the Hattori--Stallings rank of $R_q$ satisfies the identity
  \begin{equation}
    \label{eq:hs1}
    r_{R_q}=\sum_{0\leq k< |S|} (-1)^k r_{C_k}
    =\sum_{I\subsetneq S} (-1)^{|S\setminus I|-1} r_{\Mind_{I}^{S}(R_q)}.
  \end{equation}
  Applying the trace function $\mu_\caB$ (cf.~Cor.~\ref{cor:ind}) one obtains \eqref{eq:eu2}.
  The identity \eqref{eq:eu1} is then a direct consequence of \eqref{eq:altsfor} and \eqref{eq:eu2}.
  If $R=\Z\dbl q\dbr$, then the Poincar\'e polynomial of any spherical parabolic subgroup $(W_I,I)$ is invertible since its constant term is equal to $1$. 
  Hence the initial hypothesis of the theorem is satisfied.
\end{proof}

\bibliographystyle{amsplain}
\bibliography{eulhec-refs}
\end{document}